\documentclass[11pt]{amsart}
\usepackage{enumerate} 

\newtheorem{theorem}{Theorem}[section] 
\newtheorem{lemma}[theorem]{Lemma}

\newtheorem{corollary}[theorem]{Corollary}
\theoremstyle{definition}

\theoremstyle{remark}
\newtheorem{remark}[theorem]{Remark}
\numberwithin{equation}{section}

\DeclareMathOperator{\tr}{tr}

\title[Near-isometric duality of Hardy norms]{Near-isometric duality of Hardy norms with applications to harmonic mappings}

\author{Leonid V. Kovalev}
\address{215 Carnegie, Mathematics Department, Syracuse University, Syracuse, NY 13244, USA}
\email{lvkovale@syr.edu}
\thanks{L.V.K. supported by the National Science Foundation grant DMS-1764266.}

\author{Xuerui Yang}
\address{215 Carnegie, Mathematics Department, Syracuse University, Syracuse, NY 13244, USA}
\email{xyang20@syr.edu}
\thanks{X.Y. supported by Young Research Fellow award from Syracuse University.}

\subjclass[2010]{Primary 30H10; Secondary 15A60, 30C10, 31A05, 31B05} 

\keywords{Hardy space, polynomial, dual norm, harmonic mapping, matrix norm}

\textwidth6in
\hoffset-0.3in
\begin{document}
\baselineskip5.7mm

\maketitle

\begin{abstract} Hardy spaces in the complex plane and in higher dimensions have natural finite-dimensional subspaces formed by polynomials or by linear maps. We use the restriction of Hardy norms to such subspaces to describe the set of possible derivatives of harmonic self-maps of a ball, providing a version of the Schwarz lemma for harmonic maps. These restricted Hardy norms display unexpected near-isometric duality between the exponents 1 and 4, which we use to give an explicit form of harmonic Schwarz lemma.  
\end{abstract}

\section{Introduction}

This paper connects two seemingly distant subjects: the geometry of Hardy norms on finite-dimensional spaces and the gradient of a harmonic map of the unit ball. Specifically, writing $H^1_*$ for the dual of the Hardy norm $H^1$ on complex-linear functions (defined in~\S\ref{sec:polynomials}), we obtain the following description of the possible gradients of harmonic maps of the unit disk $\mathbb D$.  

\begin{theorem}\label{thm-schwarz-intro} A vector $(\alpha, \beta)\in \mathbb C^2$ is the Wirtinger derivative at $0$ of some harmonic map $f\colon \mathbb D\to\mathbb D$ if and only if $\|(\alpha, \beta)\|_{H^1_*} \le 1$. 
\end{theorem}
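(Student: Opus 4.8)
The plan is to pass to boundary values and then read the statement off as an exact duality between the $L^\infty$ unit ball and the $H^1$ norm. First I would record the boundary representation of harmonic self-maps: any harmonic $f\colon\mathbb D\to\mathbb D$ is bounded, hence is the Poisson integral $f=P[\phi]$ of some $\phi\in L^\infty(\partial\mathbb D)$ with $\|\phi\|_\infty\le1$, and conversely $P[\phi]$ maps $\mathbb D$ into $\overline{\mathbb D}$ whenever $\|\phi\|_\infty\le1$. Expanding the Poisson kernel in a Fourier series gives
\[ f(z)=\sum_{n\ge0}\hat\phi(n)z^n+\sum_{n\ge1}\hat\phi(-n)\bar z^{\,n}, \]
so the Wirtinger derivatives at the origin are precisely the first Fourier coefficients,
\[ \alpha=f_z(0)=\hat\phi(1),\qquad \beta=f_{\bar z}(0)=\hat\phi(-1). \]
Thus the set of attainable pairs is the image $K=T(B_{L^\infty})$ of the closed unit ball of $L^\infty$ under the linear map $T\phi=(\hat\phi(1),\hat\phi(-1))$. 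Since $T$ pairs $\phi$ against the trigonometric polynomials $e^{\pm i\theta}\in L^1$, it is weak-$*$ continuous, and $K$ is a compact, convex, balanced subset of $\mathbb C^2$.

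The heart of the argument is to identify $K$ by computing its support function. Given a test vector $(\gamma,\delta)$, associate the linear boundary function $G(\theta)=\gamma e^{i\theta}+\delta e^{-i\theta}$, whose $H^1$ norm is by definition $\|(\gamma,\delta)\|_{H^1}=\frac1{2\pi}\int_0^{2\pi}|G|\,d\theta$. A direct expansion shows that the standard Hermitian pairing of $T\phi$ with $(\gamma,\delta)$ on $\mathbb C^2$ equals the $L^2$ pairing of boundary functions:
\[ \langle T\phi,(\gamma,\delta)\rangle=\hat\phi(1)\bar\gamma+\hat\phi(-1)\bar\delta=\frac1{2\pi}\int_0^{2\pi}\phi\,\overline{G}\,d\theta. \]
Maximizing the real part over $\|\phi\|_\infty\le1$ is immediate: the optimal choice is $\phi=G/|G|$, giving $\sup_{\phi}\re\langle T\phi,(\gamma,\delta)\rangle=\frac1{2\pi}\int_0^{2\pi}|G|\,d\theta=\|(\gamma,\delta)\|_{H^1}$. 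Hence the support function of $K$ is exactly $\|\cdot\|_{H^1}$. A compact convex balanced body is determined by its support function, and the body whose support function is $\|\cdot\|_{H^1}$ is the closed unit ball of the dual norm $H^1_*$ (here using that in finite dimensions $(H^1_*)^*=H^1$). This yields $K=\{\,\|(\alpha,\beta)\|_{H^1_*}\le1\,\}$, which is the asserted equivalence.

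The remaining point, which I expect to be the main obstacle, is the open-versus-closed disk gap: the Poisson argument naturally produces maps into the \emph{closed} disk ($|f|\le1$), whereas the theorem concerns maps into the open disk $\mathbb D$. I would close this gap with the strict maximum principle: for $\|\phi\|_\infty\le1$ one has $|P[\phi](z)|\le P[|\phi|](z)\le1$, with equality throughout only when $\phi$ is almost everywhere a unimodular constant. Such a constant has $\hat\phi(\pm1)=0$, so it contributes only the point $(0,0)\in K$, already realized by $f\equiv0$; for every other $(\alpha,\beta)\in K$ any representing $\phi$ is non-constant, forcing $|f|<1$ throughout $\mathbb D$ and hence $f\colon\mathbb D\to\mathbb D$. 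Thus no points are lost or gained and the closed unit ball of $H^1_*$ is attained exactly. Beyond this, the only care needed is in justifying the boundary correspondence and the weak-$*$ compactness of $K$; the duality computation itself is routine once the pairing is set up.
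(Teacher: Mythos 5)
Your proof is correct, and it reaches the conclusion by a somewhat different route than the paper, although the decisive computation is the same. The paper never passes to boundary values in the forward direction: it proves the necessity of $\|(\alpha,\beta)\|_{H^1_*}\le 1$ by pairing $f$ against $g(z)=\gamma z+\delta\bar z$ on interior circles $|z|=r$ and letting $r\to 1$ (pure orthogonality of monomials, no Fatou theorem needed), and it obtains compactness of the attainable set via normal families of uniformly bounded harmonic maps rather than Banach--Alaoglu. For the reverse direction the paper, like you, evaluates the support function using exactly the extremal boundary function $\phi=G/|G|$ with $G(\theta)=\gamma e^{i\theta}+\delta e^{-i\theta}$, so the heart of both arguments coincides. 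What your packaging buys: both inclusions fall out of the single identity ``support function of $T(B_{L^\infty})$ equals $\|\cdot\|_{H^1}$,'' and you treat the open-versus-closed disk issue explicitly via the strict maximum principle --- a point the paper leaves implicit both when it claims the normal-family limit stays in $\mathcal F_0$ and when it asserts that the Poisson extension of the unimodular boundary data maps into the open disk. What the paper's packaging buys: the forward direction is more elementary (no boundary-value theory), it transfers verbatim to harmonic maps of the ball in $\mathbb R^n$ using spherical harmonics (its Theorem 5.1), and its extremal map is shown to satisfy $g(0)=0$ by odd symmetry, which yields the stronger equivalence (recorded as Theorem 4.1) that one may additionally normalize $f(0)=0$; your argument as written does not address that refinement, though it is not required by the statement you were asked to prove.
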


Theorem~\ref{thm-schwarz-intro} can be compared to the behavior of holomorphic maps $f\colon \mathbb D\to\mathbb D$ for which the set of all possible values of $f'(0)$ is simply $\overline{\mathbb D}$. The appearance of $H^1_*$ norm here leads one to look for a concrete description of this norm. It is well known that the duality of holomorphic Hardy spaces $H^p$ is not isometric, and in particular the dual of $H^1$ norm is quite different from $H^\infty$ norm even on finite dimensional subspaces (see~\eqref{not-infty}). However, it has a striking similarity to $H^4$ norm. 

\begin{theorem}\label{thm-norm-comp-intro} For all $\xi\in \mathbb C^2\setminus \{(0,0)\}$, 
$1\le  \|\xi\|_{H^1_*} / \|\xi\|_{H^4} \le 1.01$.
\end{theorem}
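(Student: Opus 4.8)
The plan is to use the large symmetry group shared by the two norms to reduce the claim to a single-variable inequality built from complete elliptic integrals, and then to bound that function. Throughout, identify $\xi=(\alpha,\beta)$ with the boundary function $P_\xi(\theta)=\alpha e^{i\theta}+\beta e^{-i\theta}$, so that $\|\xi\|_{H^p}=\bigl(\frac{1}{2\pi}\int_0^{2\pi}|P_\xi|^p\,d\theta\bigr)^{1/p}$ and the pairing defining $H^1_*$ is $\re\langle P_\xi,P_\eta\rangle$ in $L^2$. Both $\|\cdot\|_{H^4}$ and $\|\cdot\|_{H^1}$ depend only on $(|\alpha|,|\beta|)$ and are invariant under the swap $\alpha\leftrightarrow\beta$; because the $L^2$ pairing intertwines the torus action $(\alpha,\beta)\mapsto(e^{i\phi}\alpha,e^{i\psi}\beta)$ and the swap, the dual norm $H^1_*$ inherits both symmetries. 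Hence I may take $\xi=(a,b)$ with $a\ge b\ge0$ and, by homogeneity, normalize $a^2+b^2=1$. A short Fourier computation then yields the closed form $\|\xi\|_{H^4}^4=a^4+b^4+4a^2b^2=1+2a^2b^2$, so the denominator is completely explicit.

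For the numerator I would invoke the duality behind Theorem~\ref{thm-schwarz-intro}: the closed unit ball $B_*$ of $H^1_*$ equals $\{(\hat F(1),\hat F(-1)):\|F\|_\infty\le1\}$, and its support function is $\eta\mapsto\|\eta\|_{H^1}$. Consequently $\partial B_*$ is swept out by the gradient of the support function, i.e.\ by the moments of the unimodular extremizer $F^*=P_\eta/|P_\eta|$: for the unit normal $\eta=(\cos s,\sin s)$ the boundary point has
\[
a(s)=\frac{1}{2\pi}\int_0^{2\pi}\frac{\cos s+\sin s\cos2\theta}{\sqrt{1+\sin2s\,\cos2\theta}}\,d\theta,
\]
together with the symmetric formula for $b(s)$. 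Since $\|\xi\|_{H^1}=\frac{2}{\pi}(a+b)\,E\bigl(2\sqrt{ab}/(a+b)\bigr)$ and the moment integrals above are of the same elliptic type, this puts $\partial B_*$ in explicit elliptic-integral form. Because $\|\xi\|_{H^1_*}\equiv1$ on $\partial B_*$, the theorem becomes the scalar two-sided estimate $\tfrac{1}{1.01}\le\bigl\|(a(s),b(s))\bigr\|_{H^4}\le1$ for $s\in[0,\tfrac{\pi}{4}]$.

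Consider the two sides separately. The right inequality is equivalent to $B_*\subseteq B_{H^4}$, i.e.\ $\|\xi\|_{H^4}\le\|\xi\|_{H^1_*}$; unwinding the definitions, it is exactly the operator-norm bound $\|P_V\|_{L^\infty\to L^4}\le1$, where $P_V$ is the orthogonal projection onto $\mathrm{span}(e^{i\theta},e^{-i\theta})$. This is not formal: testing only $\|P_VF\|_2\le\|F\|_\infty$ gives $a^2+b^2\le1$ and hence merely $\|\xi\|_{H^4}^4\le\tfrac32$. The bound is in fact sharp, with equality at $\xi=(1,0)$ and $(0,1)$ (matching the equality case of the theorem), and it encodes a rigidity of unimodular functions: $|\hat F(1)|$ and $|\hat F(-1)|$ cannot both be large while $|F|\equiv1$. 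I would prove it either by the boundary comparison above or by extremal analysis of unimodular $F$.

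The main obstacle is the left inequality, $\bigl\|(a(s),b(s))\bigr\|_{H^4}\ge1/1.01$, equivalently $\|\xi\|_{H^1_*}\le1.01\,\|\xi\|_{H^4}$. Here there is almost no slack: crude comparisons such as $\|\xi\|_{H^1_*}\le\|P_\xi\|_\infty=a+b$ only yield constants near $1.1$--$1.3$, so the second digit of the sharp constant must come from the actual elliptic profile of $\partial B_*$. My plan is to write $R(s)=1/\bigl\|(a(s),b(s))\bigr\|_{H^4}$ as an explicit function of $m=\sin2s\in[0,1]$ through the complete elliptic integrals, verify $R(0)=1$, and locate its maximum, which numerics place at the symmetric configuration $a=b$ (where $(a(s),b(s))=(2/\pi,2/\pi)$ and $R\approx1.004$). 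Certifying $\sup_s R<1.01$ globally would then proceed by a combination of series expansion near $a=b$, monotonicity of the relevant elliptic quotients, and an explicit remainder estimate (or validated interval arithmetic). Controlling these elliptic integrals tightly enough to pin the constant is the delicate technical core of the argument.
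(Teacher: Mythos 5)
Your reduction framework is sound: the identification of the $H^1_*$ unit ball $B_*$ with $\{(\hat F(1),\hat F(-1))\colon \|F\|_\infty\le 1\}$, the parametrization of $\partial B_*$ by moments of the unimodular extremizers $P_\eta/|P_\eta|$, and the restatement of the theorem as the two-sided bound $1/1.01\le\|(a(s),b(s))\|_{H^4}\le 1$ are all correct, and they mirror the duality that actually drives the paper (indeed, your description of $B_*$ is essentially the content of Theorem~\ref{thm-schwarz}). The genuine gap is that neither of the two inequalities constituting the theorem is proved. For the right inequality ($\|\xi\|_{H^4}\le\|\xi\|_{H^1_*}$, your bound $\|P_V\|_{L^\infty\to L^4}\le 1$) you say you ``would prove it either by the boundary comparison above or by extremal analysis of unimodular $F$''; that is a restatement of the problem, not an argument, and as you yourself note the inequality is not formal (Parseval alone only yields the constant $(3/2)^{1/4}$). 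For the left inequality ($\|\xi\|_{H^1_*}\le 1.01\,\|\xi\|_{H^4}$) you give a program---express $R(s)$ through complete elliptic integrals, expand near $a=b$, invoke monotonicity of ``the relevant elliptic quotients,'' or fall back on validated interval arithmetic---but none of these steps is carried out, and you concede this is ``the delicate technical core.'' So what is submitted is a correct setup with both essential estimates missing.

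For comparison, the paper closes exactly these two gaps with explicit certificates, avoiding any direct analysis of the elliptic parametrization of $\partial B_*$. For the constant-$1.01$ bound it inserts the Barnard--Pearce--Richards (Ramanujan) lower bound $\|(1,t)\|_{H^1}\ge 3-\sqrt{4-t^2}$ into the supremum defining $\|(1,\lambda)\|_{H^1_*}$; the tangent-line computation of Lemma~\ref{lem-circle} then gives the closed form $(3+2\sqrt{1+5\lambda^2})/5$, and an elementary monotonicity argument produces the constant $(3+2\sqrt 6)/(5\cdot 6^{1/4})<1.01$. For the other bound it passes to the dual statement $\|\xi\|_{H^1}\le\|\xi\|_{H^4_*}$ and sandwiches the polynomial $1+\tfrac14\lambda^2+\tfrac1{64}\lambda^4+\tfrac1{128}\lambda^6$ between $G$ and $F^*$, the upper half certified by the explicit rational test point $t=4\lambda/(8-3\lambda^2)$ and a verifiable polynomial positivity identity~\eqref{monster}. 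If you want to complete your route, you need analogues of these certificates: a proof that the moment curve $(a(s),b(s))$ stays inside the $H^4$ unit ball, and a quantitative bound keeping it outside the ball of radius $1/1.01$; both require concrete estimates on the elliptic integrals that your sketch defers rather than supplies.
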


Since the $H^4$ norm can be expressed as $\|(\xi_1, \xi_2)\|_4 = (|\xi_1|^4 + 4|\xi_1 \xi_2|^2 + |\xi_4|^4)^{1/4}$,  Theorem~\ref{thm-norm-comp-intro} supplements  Theorem~\ref{thm-schwarz-intro} with an explicit estimate. 

In general, Hardy norms are merely quasinorms when $p<1$, as the triangle inequality fails. However, their restrictions to the subspaces of degree $1$ complex polynomials or of $2\times 2$ real matrices are actual norms  (Theorem~\ref{thm-hardy-norm} and Corollary~\ref{cor-hardy-norm}). We do not know if this property holds for $n\times n$ matrices with $n > 2$. 

The paper is organized as follows. Section~\ref{sec:polynomials} introduces Hardy norms on polynomials. In Section~\ref{sec:dual} we prove  Theorem~\ref{thm-norm-comp-intro}. Section~\ref{sec:schwarz} concerns the Schwarz lemma for planar harmonic maps,  Theorem~\ref{thm-schwarz-intro}. In section~\ref{sec:higher} we consider higher dimensional analogues of these results. 

\section{Hardy norms on polynomials}\label{sec:polynomials}

For a polynomial $f\in \mathbb C[z]$, the Hardy space ($H^p$) quasinorm is defined by 
\[
\|f\|_{H^p} = \left(
\frac{1}{2\pi} \int_0^{2\pi} |f(e^{it})|^p\,dt
\right)^{1/p}
\]
where $0<p<\infty$. There are two limiting cases: $p\to \infty$ yields the supremum norm 
\[
\|f\|_{H^\infty} = \max_{t\in \mathbb R} |f(e^{it})|
\]
and the limit $p\to 0$ yields the Mahler measure of $f$: 
\[
\|f\|_{H^0} = \exp\left(
\frac{1}{2\pi} \int_0^{2\pi} \log |f(e^{it})|\,dt
\right).
\]
An overview of the properties of these quasinorms can be found 
in~\cite[Chapter 13]{RahmanSchmeisser} and in~\cite{Pritsker}. In general they satisfy the definition of a norm only when $p\ge 1$. 

The Hardy quasinorms on vector spaces $\mathbb C^n$ are defined by  
\[
\|(a_1, \dots, a_{n})\|_{H^p}
= \|f\|_{H^p}, \quad 
f(z) = \sum_{k=1}^{n} a_{k} z^{k-1}.
\]
We will focus on the case $n=2$, which corresponds to the $H^p$ quasinorm of degree $1$ polynomials $a_1+a_2z$. These quantities appear as multiplicative constants in sharp inequalities for polynomials of general degree: see Theorems~13.2.12 and 14.6.5 in~\cite{RahmanSchmeisser}, or Theorem~5 in~\cite{Pritsker}. In general, $H^p$ quasinorms cannot be expressed in elementary functions even on $\mathbb C^2$. Notable exceptions include 
\begin{equation}\label{explicit-norms}
\begin{split}
 \|(a_1, a_2)\|_{H^0} & = \max(|a_1|, |a_2|),  \\
 \|(a_1, a_2)\|_{H^2} & = \left(|a_1|^2+|a_2|^2\right)^{1/2},  \\
 \|(a_1, a_2)\|_{H^4} & = \left(|a_1|^4 + 4|a_1|^2|a_2|^2 + |a_2|^4\right)^{1/4},  \\ 
 \|(a_1, a_2)\|_{H^\infty} & = |a_1| + |a_2|.
 \end{split}
\end{equation}
Another easy evaluation is
\begin{equation}\label{specialH1}
\|(1, 1)\|_{H^1}    
= \frac{1}{2\pi}\int_0^{2\pi}|1+e^{it}|\,dt
 = \frac{1}{2\pi}\int_0^{2\pi}2|\cos(t/2)|\,dt = \frac{4}{\pi}.
\end{equation}
However, the general formula for the $H^1$ norm on $\mathbb C^2$ involves the complete elliptic integral of the second kind $E$. Indeed, writing $k = |a_2/a_1|$, we have 
\begin{equation}\label{elliptic-integral}
\begin{split}
\|(a_1, a_2)\|_{H^1}
& = |a_1| \, \|(1, k)\|_{H^1}
= \frac{|a_1|}{2\pi}\int_0^{2\pi} |1+ke^{2it}|\,dt 
\\ = & |a_1| \frac{2(k+1)}{\pi} \int_0^{\pi/2} \sqrt{1-\left(\frac{2\sqrt{k}}{k+1}\right)^2 \sin^2 t}\,dt
\\= & |a_1| \frac{2(k+1)}{\pi} E\left(\frac{2\sqrt{k}}{k+1}\right).
\end{split}
\end{equation}

Perhaps surprisingly, the Hardy quasinorm on $\mathbb C^2$ is a norm (i.e., it satisfies the triangle inequality) even when $p<1$.  

\begin{theorem}\label{thm-hardy-norm} The Hardy quasinorm on $\mathbb C^2$ is a norm for all $0\le p\le \infty$. In addition, it has the symmetry properties 
\begin{equation}\label{symmetry}
\|(a_1, a_2)\|_{H^p} = \|(a_2, a_1)\|_{H^p} 
= \|(|a_1|, |a_2|)\|_{H^p}. 
\end{equation}
\end{theorem}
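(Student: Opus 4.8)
The plan is to treat the symmetry identities, which are elementary, separately from the norm property, whose only genuine content is the triangle inequality for $0<p<1$. Throughout I write $f(z)=a_1+a_2z$, so that $\|(a_1,a_2)\|_{H^p}$ is the $L^p$ norm of $t\mapsto a_1+a_2e^{it}$ on the circle.

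For the identities \eqref{symmetry} I would argue purely by changing variables on the circle. The substitution $t\mapsto -t$ gives $|a_1+a_2e^{it}|=|a_2+a_1e^{-it}|$, and since the integrand is $2\pi$-periodic this yields $\|(a_1,a_2)\|_{H^p}=\|(a_2,a_1)\|_{H^p}$. For the second identity I would factor out the phases: writing $a_j=|a_j|e^{i\theta_j}$ one has $|a_1+a_2e^{it}|=\bigl||a_1|+|a_2|e^{i(t+\theta_2-\theta_1)}\bigr|$, and the rotation $t\mapsto t+\theta_2-\theta_1$ leaves the integral over the full circle unchanged. The cases $p\in\{0,\infty\}$ then follow by passing to the limit (or directly from \eqref{explicit-norms}). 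The useful consequence is that the norm depends only on $(|a_1|,|a_2|)$ and is positively homogeneous, hence is determined by the single-variable function $\phi(k)=\|(1,k)\|_{H^p}$, $k\ge 0$.

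Positive definiteness and homogeneity being immediate, the norm property is equivalent to the triangle inequality. For $p\ge 1$ this is Minkowski's inequality in $L^p$ of the circle, and for $p\in\{0,\infty\}$ it is visible from the $\ell^\infty$ and $\ell^1$ formulas in \eqref{explicit-norms}, so I would concentrate on $0<p<1$. Here I would first record that $\phi$ is nondecreasing, because $F(k):=\phi(k)^p$ is the mean of the subharmonic function $w\mapsto|1+w|^p$ over the circle $|w|=k$, hence nondecreasing in $k$. Combined with $|a_j+b_j|\le|a_j|+|b_j|$ and the symmetry reduction, the triangle inequality for arbitrary $(a_1,a_2),(b_1,b_2)\in\mathbb C^2$ then follows from subadditivity of $\|\cdot\|_{H^p}$ on the nonnegative quadrant. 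On that quadrant the norm is the perspective $(x,y)\mapsto x\,\phi(y/x)$ of $\phi$, which is convex precisely when $\phi$ is; and for a positively homogeneous function convexity is the same as subadditivity. Finally, since $t\mapsto t^{1/p}$ is convex and increasing for $p<1$, it suffices to prove that $F=\phi^p$ is convex.

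Thus everything reduces to $F''(k)\ge 0$ for $0<p<1$ and $k>0$, the boundary value $k=1$ (where $1+ke^{it}$ vanishes at $t=\pi$) being recovered by continuity. Differentiating $F(k)=\frac{1}{2\pi}\int_0^{2\pi}\rho^{p/2}\,dt$ twice under the integral sign, with $\rho=\rho(t,k)=1+2k\cos t+k^2$, produces $\frac{p}{2\pi}\int_0^{2\pi}\rho^{p/2-2}B\,dt$ with $B$ a quadratic in $\cos t$; integrating by parts in $t$ (using $\frac{d}{dt}\bigl(\sin t\,\rho^{p/2-1}\bigr)$) to eliminate the term linear in $\cos t$, I expect $F''$ to collapse to a comparison of two explicit positive integrals of the form $(2-p)\,S(k)\ge(1-p)\,M(k)$, where $S(k)=\frac{1}{2\pi}\int_0^{2\pi}\sin^2 t\,\rho^{p/2-2}\,dt$ and $M(k)=\frac{1}{2\pi}\int_0^{2\pi}\rho^{p/2-1}\,dt$. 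This inequality is the main obstacle: the integrand of the natural difference $\int[(2-p)\sin^2 t-(1-p)\rho]\,\rho^{p/2-2}\,dt$ changes sign, so pointwise positivity fails and one must exploit the weight $\rho^{p/2-2}$, which is largest exactly where $|1+ke^{it}|$ is smallest. I would attack it by expressing the moments $\frac{1}{2\pi}\int_0^{2\pi}\rho^{s}\,dt$ as values of a Gauss hypergeometric function in $k^2$ and invoking a contiguous relation to reduce the claim to positivity of a single hypergeometric expression; alternatively one can relate the moments $\int\cos^j t\,\rho^{p/2-2}\,dt$ by further integration by parts and solve the resulting linear relations to exhibit the sign directly. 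Essentially all the difficulty of the theorem is concentrated in this one estimate; once it is in hand, $F''\ge 0$ and the reduction chain above delivers the triangle inequality for every $0<p<1$.
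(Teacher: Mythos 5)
Your opening moves coincide with the paper's: the change-of-variable proof of \eqref{symmetry}, the reduction to the nonnegative quadrant, the observation that there the norm is the perspective $(x,y)\mapsto x\,\phi(y/x)$, and the fact that for a $1$-homogeneous function convexity equals subadditivity. The gap is in the step you reduce everything to. You claim it suffices to prove $F=\phi^p$ is convex on all of $(0,\infty)$, i.e. $F''(k)\ge 0$ for every $k>0$; this statement is \emph{false} for $k>1$ when $p$ is small. Indeed, for fixed $k>1$ the mean of $\log|1+ke^{it}|$ over the circle equals $\log k$ (Jensen's formula), so $F(k)=1+p\log k+O(p^2)$ with uniform control of $k$-derivatives on compact subsets of $(1,\infty)$, whence $F''(k)=-pk^{-2}+O(p^2)<0$. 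One can also see this from your own formula: $F''(k)=\frac{p}{2\pi}\int_0^{2\pi}[(p-1)\rho^{p/2-1}+(2-p)\rho^{p/2-2}\sin^2 t]\,dt$, and as $p\to 0$ with $k>1$ fixed, a residue computation gives $(2-p)S(k)\to\frac{1}{k^2(k^2-1)}$ while $(1-p)M(k)\to\frac{1}{k^2-1}$, so your target inequality $(2-p)S(k)\ge(1-p)M(k)$ fails. The underlying issue: composing with the convex increasing map $t\mapsto t^{1/p}$ lets you pass from convexity of $F$ to convexity of $\phi$, but not conversely, and only $\phi$ --- not its $p$-th power --- is convex on the whole ray. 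The paper is structured around exactly this point: convexity is established on $[0,1]$ only, then transferred to $[1,\infty)$ via the homogeneity identity $G(\lambda)=\lambda G(1/\lambda)$, which preserves convexity for the $1$-homogeneous function $G=\phi$ (through $G''(\lambda)=\lambda^{-3}G''(1/\lambda)$) but has no analogue for $F$, and finally the two pieces are glued by showing $G$ is differentiable at $\lambda=1$ (differentiation under the integral with an integrable dominating function). Your proposal contains no substitute for this transfer-and-gluing step.

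There is a second gap: even on $[0,1]$, where your claim is true, you do not prove it --- you only name possible strategies (hypergeometric contiguous relations, further integration by parts). The paper dispatches this with a single identity that your sketch misses: for $|\lambda|<1$, expand $(1+\lambda e^{it})^{p/2}$ by the binomial series and apply Parseval, obtaining $F(\lambda)=\sum_{n\ge 0}\binom{p/2}{n}^2\lambda^{2n}$. Every coefficient is a square, hence nonnegative, so $F$ (and then $\phi=F^{1/p}$) is convex on $[-1,1]$ with no further estimates; the sign-changing integrand you identify as ``the main obstacle'' never appears. To salvage your plan: restrict the claim $F''\ge 0$ to $(0,1)$, prove it there by this series identity (or by completing your moment analysis on that interval), and then supply the homogeneity transfer to $[1,\infty)$ and the differentiability argument at $k=1$.
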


\begin{proof} For $p=0,\infty$ all these statements follow from~\eqref{explicit-norms}, so we assume $0<p<\infty$. The identities 
\begin{equation}\label{swap}
\int_0^{2\pi} |a_1 + a_2 e^{it}|^p\,dt
= \int_0^{2\pi} |a_1e^{-it} + a_2|^p\,dt
= \int_0^{2\pi} |a_2 + a_1e^{it}|^p\,dt
\end{equation}
imply the first part of~\eqref{symmetry}. Furthermore, the first integral in~\eqref{swap} is independent of the argument of $a_2$ while the last integral is independent of the argument of $a_1$. This completes the proof of~\eqref{symmetry}.

It remains to prove the triangle inequality in the case $0<p<1$. To this end, consider the following function of $\lambda\in \mathbb R$.
\begin{equation}\label{Gp}
G(\lambda) := \|(1, \lambda)\|_{H^p}
=  \left( \frac{1}{2\pi} \int_0^{2\pi} |1 + \lambda e^{it}|^p\,dt   \right)^{1/p}.
\end{equation}
We claim that $G$ is convex on $\mathbb R$. If $|\lambda|<1$, the binomial series 
\[
(1 + \lambda e^{it})^{p/2} = \sum_{n=0}^\infty \binom{p/2}{n} \lambda^n e^{nit}
\]
together with Parseval's identity imply
\begin{equation}\label{Gpseries}
G(\lambda) = \left(
\sum_{n=0}^\infty \binom{p/2}{n}^2 \lambda^{2n}
\right)^{1/p}.
\end{equation}
Since every term of the series is a convex function of $\lambda$, it follows that $G$ is convex on $[-1, 1]$. The power series also shows that $G$ is $C^\infty$ smooth on $(0,  1)$. For $\lambda>1$ the symmetry property~\eqref{symmetry} yields
$G(\lambda) = \lambda G(1/\lambda)$ 
which is a convex function by virtue of the identity $G''(\lambda) = \lambda^{-3} G''(1/\lambda)$. 
The piecewise convexity of $G$ on $[0, 1]$ and $[1, \infty)$ will imply its convexity on $[0, \infty)$ (hence on $\mathbb R$) as soon as we show that $G$ is differentiable at $\lambda=1$. Note that $|1+\lambda e^{it}|^p$ is differentiable with respect  to $\lambda$ when $e^{it}\ne -1$ and that for $\lambda$ close to $1$, 
\begin{equation}\label{dominate}
\frac{\partial}{\partial \lambda} |1+\lambda e^{it}|^p  \le p |1+\lambda e^{it}|^{p-1}) \le  C|t-\pi|^{p-1} 
\end{equation}
for all $t\in [0, 2\pi]\setminus\{\pi\}$, with $C$ independent of $\lambda, t$. The integrability of the right hand side of~\eqref{dominate} justifies differentiation under the integral sign:  
\[
\frac{d}{d\lambda} G(\lambda)^p = 
\frac{1}{2\pi} \int_0^{2\pi} \frac{\partial}{\partial \lambda} |1 + \lambda e^{it}|^p\,dt.  
\]
Thus $G'(1)$ exists. 

Now that $G$ is known to be convex, the convexity of the function $F(x, y) := \|(x, y)\|_{H^p} = xG(y/x)$ on the halfplane $(x, y)\in \mathbb R^2$, $x>0$, follows by computing its Hessian, which exists when $|y|\ne x$:  
\[
H_F = G''(y/x) \begin{pmatrix}
x^{-3}y^2  & -x^{-2}y  \\ 
-x^{-2}y  & x^{-1}   \\ 
\end{pmatrix}.
\]
Since $H_F$ is positive semidefinite, and $F$ is $C^1$ smooth even on the lines $|y|=|x|$, the function $F$ is convex on the halfplane $x>0$. By symmetry, convexity holds on other coordinate halfplanes as well, and thus on all of $\mathbb R^2$. The fact that $G$ is an increasing function on $[0, \infty)$ also shows that $F$ is an increasing function of each of its variables in the first quadrant $x, y\ge 0$. 

Finally, for any two points $(a_1, a_2)$ and $(b_1, b_2)$ in $\mathbb C^2$ we have
\[ \begin{split}
\|(a_1+b_1, a_2+b_2)\|_{H^p} &= 
F( |a_1+b_1|, |a_2+b_2|) 
\le F( |a_1|+|b_1|, |a_2|+|b_2|)
\\
&\le F(|a_1|, |a_2|) + F(|b_1|, |b_2|)
= \|(a_1, a_2)\|_{H^p} + \|(b_1, b_2)\|_{H^p} 
\end{split} \]
using~\eqref{symmetry} and the monotonicity and convexity of $F$. 
\end{proof}

\begin{remark} In view of Theorem~\ref{thm-hardy-norm} one might guess that the restriction of $H^p$ quasinorm to the polynomials of degree at most $n$ should satisfy the triangle inequality provided that $p > p_n$ for some $p_n<1$. This is not so: the triangle inequality fails for any $p<1$ even when the quasinorm is restricted to quadratic polynomials. Indeed, for small $\lambda\in \mathbb R$ we have 
\[
\begin{split}
\|(\lambda, 1, \lambda)\|_{H^p}^p
& = \frac{1}{2\pi}\int_0^{2\pi}(1 + 2\lambda \cos t)^p \,dt
\\ 
& = \frac{1}{2\pi}\int_0^{2\pi}\left(1 + 2\lambda p \cos t
+ 2\lambda^2 p(p-1) \cos^2 t + O(\lambda^3)\right) \,dt \\
& = 1 + \lambda^2 p(p-1) + O(\lambda^3) 
\end{split}
\]
and this quantity has a strict local maximum at $\lambda=0$ provided that $0<p<1$. 
\end{remark}

\section{Dual Hardy norms on polynomials}\label{sec:dual}

The space $\mathbb C^n$ is equipped with the inner product 
$\langle \xi, \eta \rangle = 
\sum_{k=1}^n \xi_k \overline{\eta_k}$.
Let $H^p_*$ be the norm on $\mathbb C^n$ dual to $H^p$, that is 
\begin{equation}\label{def-dual-norm}
\|\xi\|_{H^p_*} = \sup\left\{
|\langle \xi, \eta \rangle| \colon \|\eta\|_{H^p} \le 1 \right\}  
= \sup_{\eta\in\mathbb C^n\setminus\{0\}} 
\frac{|\langle \xi, \eta \rangle|}{\|\eta\|_{H^p}}.
\end{equation}
One cannot expect the $H^p_*$ norm to agree with $H^q$ for $q=p/(p-1)$ (unless $p=2$), as the duality of Hardy spaces is not isometric~\cite[Section 7.2]{Duren}. However, on the space $\mathbb C^2$ the $H^1_*$ norm turns out to be surprisingly close to $H^4$, indicating that $H^1$ and $H^4$ have nearly isometric duality in this setting. The following is a restatement of Theorem~\ref{thm-norm-comp-intro} in the form that is convenient for the proof.  

\begin{theorem}\label{thm-norm-comp} For all $\xi\in \mathbb C^2$ we have \begin{equation}\label{norm-comp1}
\|\xi\|_{H^1} \le \|\xi\|_{H^4_*} \le 1.01 \|\xi\|_{H^1}    
\end{equation}
and consequently 
\begin{equation}\label{norm-comp2}
\|\xi\|_{H^4} \le \|\xi\|_{H^1_*} \le 1.01 \|\xi\|_{H^4}.    
\end{equation}
\end{theorem}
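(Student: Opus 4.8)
The plan is to first reduce the ``consequently'' statement \eqref{norm-comp2} to \eqref{norm-comp1} by abstract duality, and then prove \eqref{norm-comp1} by collapsing it to a single real variable. For the reduction, recall that in finite dimensions $(H^p_*)_* = H^p$ and that passing to dual norms reverses pointwise inequalities while preserving constants: if $\|\cdot\|_A \le c\,\|\cdot\|_B$ everywhere, then $\|\cdot\|_{B_*}\le c\,\|\cdot\|_{A_*}$, directly from \eqref{def-dual-norm}. Applying this to the two halves of \eqref{norm-comp1} (with the choices $(A,B)=(H^1,H^4_*)$ and $(A,B)=(H^4_*,H^1)$) produces exactly the two halves of \eqref{norm-comp2}, so only \eqref{norm-comp1} needs proof.

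Next I would exploit symmetry. By \eqref{symmetry} the norms $H^1$ and $H^4$ are invariant under the swap $(a_1,a_2)\mapsto(a_2,a_1)$ and under coordinatewise phase rotations; since these maps are unitary, $H^4_*$ inherits the same invariances through \eqref{def-dual-norm}. Together with homogeneity this lets me assume $\xi=(1,t)$ with $t\in[0,1]$, turning \eqref{norm-comp1} into the scalar bound $1\le \|(1,t)\|_{H^4_*}/\|(1,t)\|_{H^1}\le 1.01$ on $[0,1]$.

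To make both norms explicit I would compute the dual directly. The supremum defining $\|(1,t)\|_{H^4_*}$ is attained, and by the symmetries the maximizer $(u,v)$ may be taken with $u,v\ge0$; maximizing $u+tv$ subject to $u^4+4u^2v^2+v^4=1$ and setting $s=v/u$, the Lagrange condition yields $t=(2s+s^3)/(1+2s^2)$, which is a bijection of $[0,1]$ onto itself, and back-substitution collapses to the clean expression
\[
\|(1,t(s))\|_{H^4_*} = \frac{(1+4s^2+s^4)^{3/4}}{1+2s^2}.
\]
On the other side, specializing \eqref{Gpseries} to $p=1$ gives $\|(1,t)\|_{H^1}=\sum_{n\ge0}\binom{1/2}{n}^2 t^{2n}$, a series with positive coefficients summing to $4/\pi$ at $t=1$ by \eqref{specialH1}.

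The crux is the resulting one-variable inequality, and here a Taylor expansion reveals why the bound is so tight: both sides equal $1+s^2-\tfrac{11}{4}s^4+O(s^6)$, so the ratio is $1+O(s^6)$ near the origin — this is the near-isometric phenomenon itself, and the source of the slack in the constant $1.01$. To prove $1\le\text{ratio}\le1.01$ rigorously I would eliminate the transcendental term: since the $H^1$ series has positive coefficients with tail beyond any partial sum at most $\tfrac{4}{\pi}\,t^{2(N+1)}$, it can be trapped between explicit polynomials in $t$, and with $t=t(s)$ rational this reduces both inequalities to algebraic inequalities in $s$ on $[0,1]$ that can be checked by elementary estimates, subdividing $[0,1]$ into finitely many pieces if needed. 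I expect this final step to be the main obstacle: because the two norms agree through order $s^4$, the sign and magnitude of their difference are governed only by higher-order terms, so the $H^1$ tail and the fractional power $(1+4s^2+s^4)^{3/4}$ must each be estimated sharply — crude bounds will not resolve a discrepancy as small as the one at stake, where the true extremal ratio lies well inside $[1,1.01]$.
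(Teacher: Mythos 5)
Your framework is correct and genuinely different from the paper's, so the comparison is worth recording. The duality reduction of \eqref{norm-comp2} to \eqref{norm-comp1} is valid (the paper dualizes in a different pattern: it proves the first half of \eqref{norm-comp1} and the second half of \eqref{norm-comp2} directly, then dualizes each). Your Lagrange computation is right: writing $G(t)=\|(1,t)\|_{H^1}$ and $F^*(t)=\|(1,t)\|_{H^4_*}$, and setting $t(s)=(2s+s^3)/(1+2s^2)$, which is a bijection of $[0,1]$ onto itself because $t'(s)=(2-s^2+2s^4)/(1+2s^2)^2>0$, one gets
\[
F^*\bigl(t(s)\bigr)=\frac{(1+4s^2+s^4)^{3/4}}{1+2s^2},
\]
and the near-isometry is indeed visible in the expansions: I compute $G(t(s))=1+s^2-\tfrac{11}{4}s^4+7s^6+O(s^8)$ against $F^*(t(s))=1+s^2-\tfrac{11}{4}s^4+\tfrac{29}{4}s^6+O(s^8)$. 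The paper never computes $H^4_*$ in closed form: it lower-bounds the supremum \eqref{F-def} at the rational test point $t=4\lambda/(8-3\lambda^2)$ --- which is precisely a low-order rational approximation to the inverse of your bijection $t(s)$ --- and it obtains the constant $1.01$ on the other side of the duality, bounding $\|(1,\lambda)\|_{H^1_*}$ above via Ramanujan's approximation \eqref{BPR} and Lemma~\ref{lem-circle}. Your route eliminates that imported inequality, a genuine gain in self-containedness; the cost is that both of your estimates must be settled by hand near $t=1$, where the margins are very small.

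That is exactly where the proposal has gaps. First, the tail bound you state for the $H^1$ series is too weak for the direction $\|\xi\|_{H^1}\le\|\xi\|_{H^4_*}$: bounding the tail by $\tfrac{4}{\pi}t^{2(N+1)}$ produces an upper polynomial whose value at $t=1$ is at least $1+\tfrac{4}{\pi}>2$, whereas $F^*(1)=6^{3/4}/3\approx 1.278$; so that trapping fails near $t=1$ for \emph{every} $N$. The repair is the sharp tail constant: positivity of the coefficients gives a tail at most $\bigl(\tfrac{4}{\pi}-T_{2N}(1)\bigr)t^{2(N+1)}$, where $T_{2N}$ is the degree-$2N$ Taylor polynomial of $G$; for $N=2$ this is exactly the paper's coefficient $\tfrac{1}{128}$, obtained there by its increasing-function trick. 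Second, the decisive one-variable verifications are never carried out, and the numbers show they cannot be dispatched casually: with the degree-4 lower bound $G(t)\ge 1+\tfrac{t^2}{4}+\tfrac{t^4}{64}$, already at $t=1$ the ratio $F^*(t)\big/\bigl(1+\tfrac{t^2}{4}+\tfrac{t^4}{64}\bigr)$ equals approximately $1.0097$, leaving a margin of only about $3\times 10^{-4}$ under $1.01$; you should instead use the degree-6 truncation $1+\tfrac{t^2}{4}+\tfrac{t^4}{64}+\tfrac{t^6}{256}$, which brings the value at $t=1$ down to about $1.0066$. With these corrections, each of your two inequalities reduces, after substituting $t=t(s)$, clearing the powers of $1+2s^2$, and raising to fourth powers, to positivity of an explicit polynomial in $s$ on $[0,1]$ of degree on the order of $50$--$70$; that check is the precise analogue of the paper's identity \eqref{monster} and still has to be exhibited and verified (by computer algebra plus coefficient estimates). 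Until that is done, what you have is a sound and feasible program rather than a proof.
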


It should be noted that while the $H^1$ norm on $\mathbb C^2$ is a non-elementary function~\eqref{elliptic-integral}, the $H^4$ norm has a simple algebraic form~\eqref{explicit-norms}. To see that having the exponent $p=4$, rather than the expected $p=\infty$, is essential in Theorem~\ref{thm-norm-comp}, compare the following:
\begin{equation}\label{not-infty}
\begin{split}
\|(1, 1)\|_{H^1_*} & = \frac{2}{\|(1, 1)\|_{H^1}} = \frac{\pi}{2}
\approx 1.57
, \\ 
\|(1, 1)\|_{H^\infty} & = 2, \\ 
\|(1, 1)\|_{H^4} & = 6^{1/4} \approx 1.57.
\end{split}
\end{equation}

The proof of Theorem~\ref{thm-norm-comp} requires an elementary lemma from analytic geometry. 

\begin{lemma}\label{lem-circle} If $0<r<a$ and $b\in \mathbb R$, then 
\begin{equation}\label{sup-circle}
\sup_{\theta\in \mathbb R} 
\frac{b - r\sin \theta}{a  - r\cos\theta}
= \frac{ab + r\sqrt{a^2+b^2-r^2}}{a^2-r^2}.
\end{equation}
\end{lemma}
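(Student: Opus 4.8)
The plan is to read the quotient in~\eqref{sup-circle} geometrically as a slope and reduce the supremum to a tangent-line computation. First I would record that the hypothesis $0<r<a$ makes the denominator strictly positive, since $a-r\cos\theta \ge a-r>0$ for every $\theta$; hence the left-hand side is a continuous $2\pi$-periodic function $f(\theta)$ and its supremum is attained. The key observation is that
\[
\frac{b-r\sin\theta}{a-r\cos\theta}
\]
is exactly the slope of the line through the fixed point $(a,b)$ and the moving point $P(\theta)=(r\cos\theta,r\sin\theta)$, which traces the circle $C$ of radius $r$ centered at the origin. Because $a>r$ we have $a^2+b^2>r^2$, so $(a,b)$ lies strictly outside $C$.

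Next I would identify the maximizing configurations. Differentiating, one finds that $f'(\theta)=0$ is equivalent to $a\cos\theta+b\sin\theta=r$, which rewrites as $\langle (a,b)-P(\theta),\,P(\theta)\rangle=0$; that is, the segment from $P(\theta)$ to $(a,b)$ is orthogonal to the radius $OP(\theta)$, the classical condition that the line be tangent to $C$. Thus every extremum of the slope occurs along a tangent line from $(a,b)$ to $C$, and it remains to compute the two tangent slopes and select the larger. Writing a general line through $(a,b)$ as $y-b=m(x-a)$ and imposing that its distance to the origin equals $r$ yields
\[
(a^2-r^2)m^2-2abm+(b^2-r^2)=0.
\]
A short computation shows the discriminant equals $4r^2(a^2+b^2-r^2)$, so the two roots are $m=\dfrac{ab\pm r\sqrt{a^2+b^2-r^2}}{a^2-r^2}$, and since $a^2-r^2>0$ the larger root is the one with the $+$ sign, matching the right-hand side of~\eqref{sup-circle}.

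The step that requires the most care is confirming that this larger tangent slope is genuinely the supremum rather than $+\infty$: a priori the range of slopes of secant lines from an external point could be an unbounded pair of rays if the vertical direction lies inside the cone of lines meeting $C$. Here is precisely where the full strength of $a>r$ (rather than merely $(a,b)\notin C$) is used: the vertical line $x=a$ is at distance $a>r$ from the origin and so misses $C$ entirely, which forces the slope set to be a bounded closed interval whose endpoints are the two tangent slopes, the maximum being the $+$ root above. As a sanity check I would verify the formula in the symmetric case $b=0$, where it reduces to $r/\sqrt{a^2-r^2}$; and if desired one can bypass the geometric bounding argument by evaluating $f$ at the critical points directly, using the identities $a-r\cos\theta=\sin\theta\,(a\sin\theta-b\cos\theta)$ and $b-r\sin\theta=-\cos\theta\,(a\sin\theta-b\cos\theta)$ to obtain $f=-\cot\theta$ there, then solving $a\cos\theta+b\sin\theta=r$ together with $\cos^2\theta+\sin^2\theta=1$ and selecting the larger value.
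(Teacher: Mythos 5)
Your proof is correct and takes essentially the same route as the paper: both read the quotient as the slope of a line from the external point $(a,b)$ to a point of the circle $x^2+y^2=r^2$ and reduce the supremum to the slope of a tangent line from $(a,b)$. The only difference is the final computation---the paper obtains the tangent slope from the angle-addition formula $\tan(\alpha+\beta)$ with $\tan\alpha=b/a$ and $\tan\beta=r/\sqrt{a^2+b^2-r^2}$, whereas you solve the quadratic $(a^2-r^2)m^2-2abm+(b^2-r^2)=0$ arising from the distance condition; your write-up also makes explicit the justifications (critical points occur exactly at tangency, and the slope set is bounded since $a>r$) that the paper leaves implicit.
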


\begin{proof} The quantity being maximized is the slope of a line through $(a, b)$ and a point on the circle $x^2+y^2=r^2$. The slope is maximized by one of two tangent lines to the circle passing through $(a, b)$. Let $\tan \alpha = b/a$ be the slope of the line $L$ through $(0, 0)$ and $(a, b)$. This line makes angle $\beta$ with the tangents, where $\tan \beta = r/\sqrt{a^2+b^2-r^2}$. Thus, the slope of the tangent of interest is   
\[
\tan(\alpha+\beta) = 
\frac{\tan\alpha + \tan \beta}{1-\tan \alpha \tan \beta} = \frac{b\sqrt{a^2+b^2-r^2} +ar }{a\sqrt{a^2+b^2-r^2} - br}
\]
which simplifies to~\eqref{sup-circle}.
\end{proof}

\begin{proof}[Proof of Theorem~\ref{thm-norm-comp}] 
Because of the symmetry properties~\eqref{symmetry} and the homogeneity of norms, it suffices to 
consider $\xi = (1, \lambda)$ with $0\le \lambda \le 1$. This restriction on $\lambda$ will remain in force throughout this proof. 

The function 
\[
G(\lambda) := \|(1,\lambda)\|_{H^1} 
= \frac{1}{2\pi}\int_0^{2\pi} |1+\lambda e^{it}|\,dt  
\]
has been intensely studied due to its relation with the arclength of the ellipse and the complete elliptic integral~\cite{AB, BPR2}. It can be written as 
\begin{equation}\label{G-def}
G(\lambda) = 
\frac{L(x, y)}{\pi(x+y)}
= {}_2F_1(-1/2,-1/2; 1;\lambda^2) 
= \sum_{n=0}^\infty \left(\frac{(-1/2)_n}{n!}\right)^2 \lambda^{2n}
\end{equation}
where $L$ is the length of the ellipse with semi-axes $x, y$ and $\lambda=(x-y)/(x+y)$. The Pochhammer symbol $(z)_n = z(z+1)\cdots (z+n-1)$ and the hypergeometric function ${}_2F_1$ are involved in~\eqref{G-def} as well. A direct way to obtain the Taylor series~\eqref{G-def} for $G$ is to use the binomial series as in~\eqref{Gpseries}.

As noted in~\eqref{explicit-norms}, the $H^4$ norm of $(1,\lambda)$ is an elementary function: 
\[
F(\lambda)  := \|(1,\lambda)\|_{H^4}
= (1+4\lambda^2+\lambda^4)^{1/4}.
\]
The dual norm $H^4_*$ can be expressed as
\begin{equation}\label{F-def}
F^*(\lambda) :=
\|(1,\lambda)\|_{H^4_*} = 
\sup_{t\in \mathbb R} \frac{1 + \lambda t}{(1+4t^2+t^4)^{1/4}}
\end{equation}
where the second equality follows from~\eqref{def-dual-norm} by letting $b=(1, t)$. 
Similarly, the $H^1_*$ norm of $(1, \lambda)$ is  
\begin{equation}\label{Gs-def}
G^*(\lambda) :=
\|(1,\lambda)\|_{H^1_*} = 
\sup_{t\in \mathbb R} \frac{1 + \lambda t}{G(t)}.
\end{equation}

Our first goal is to prove that
\begin{equation}\label{001}
G^*(\lambda) \le 1.01 F(\lambda).
\end{equation}
The proof of~\eqref{001} is based on Ramanujan's approximation $G(\lambda)\approx 3-\sqrt{4-\lambda^2}$ which originally appeared in~\cite{Ramanujan}; see~\cite{AB} for a discussion of the history of this and several other approximations to $G$. Barnard, Pearce, and Richards~\cite[Proposition 2.3]{BPR2} proved that Ramanujan's approximation gives a lower bound for $G$: 
\begin{equation}\label{BPR}
G(\lambda) \ge  3-\sqrt{4-\lambda^2}.
\end{equation}
We will use this estimate to obtain an upper bound for $G^*$.

The supremum in~\eqref{Gs-def} only needs to be taken over $t\ge 0$ since the denominator is an even function. Furthermore, it can be restricted  to $t\in [0, 1]$ because for $t>1$ the homogeneity and symmetry properties of $H^1$ norm imply 
\[
\frac{1+\lambda t}{\|(1, t)\|_{H^1}}
= \frac{t^{-1} +\lambda}{\|(1, t^{-1})\|_{H^1}} < 
\frac{1 + \lambda t^{-1}}{\|(1, t^{-1})\|_{H^1}}. 
\]
Restricting $t$ to $[0,1]$ in~\eqref{Gs-def} allows us to use inequality~\eqref{BPR}:
\begin{equation}\label{Gs-upper}
G^*(\lambda) \le \sup_{t\in [0, 1]} \frac{1 + \lambda t}{3-\sqrt{4-t^2}}.
\end{equation}
Writing $t = - 2\sin \theta$ and applying Lemma~\ref{sup-circle} we obtain 
\begin{equation}\label{Gs-upper2}
\begin{split}
G^*(\lambda) & \le \lambda 
\sup_{\theta\in [-\pi/6, 0]} \frac{\lambda^{-1} - 2\sin \theta}{3 - 2\cos \theta}
\le \lambda\, \frac{3\lambda^{-1} + 2 \sqrt{5 + \lambda^{-2}}}{5}
\\ & = \frac{3 + 2\sqrt{1+5\lambda^2}}{5}.
\end{split}
\end{equation}

The function
\[
f(s) := \frac{3 + 2\sqrt{1+5s}}{(1+4s+s^2)^{1/4}}
\] 
is increasing on $[0, 1]$. Indeed, 
\[\begin{split}
f'(s) = \frac{3(6s+2 - (s+2)\sqrt{1+5s})}
{2\sqrt{1+5s} (1+4s+s^2)^{5/4}}
\end{split} \]
which is positive on $(0, 1)$ because 
\[
(6s+2)^2 - (s+2)^2(1+5s) = 5s^2(3-s)> 0.
\]
Since $f$ is increasing, the estimate~\eqref{Gs-upper2} implies
\[
\frac{G^*(\lambda)}{F(\lambda)} \le \frac{1}{5} f(\lambda^2)
\le \frac{1}{5}f(1) = \frac{3+2\sqrt{6}}{5\cdot 6^{1/4}} < 1.01.
\]
This completes the proof of~\eqref{001}. 

Our second goal is the following comparison of $F^*$ and $G$ with a polynomial: 
\begin{equation}\label{two-sides}
G(\lambda) \le 1 + \frac{1}{4}\lambda^2 
+ \frac{1}{64}\lambda^4 
+ \frac{1}{128}\lambda^6 \le F^*(\lambda).
\end{equation}
To prove the left hand side of~\eqref{two-sides}, let $T_4(\lambda) = 1+\lambda^2/4 + \lambda^4/64$ be the Taylor polynomial of $G$ of degree $4$. Since all Taylor coefficients of $G$ are nonnegative~\eqref{G-def}, the function 
\[
\phi(\lambda) := \frac{G(\lambda) - T_4(\lambda)}{\lambda^6} - \frac{1}{128}
\]
is increasing on $(0, 1]$. At $\lambda = 1$, in view of~\eqref{specialH1}, it evaluates to 
\[
G(1) - 1 - \frac{1}{4} - \frac{1}{64} - \frac{1}{128}
 = \frac{4}{\pi} - \frac{163}{128}  
\]
which is negative because $512/163 = 3.1411\ldots < \pi$. Thus $\phi(\lambda)<0$ for $0<\lambda\le 1$, proving the left hand side of~\eqref{two-sides}. 

The right hand side of~\eqref{two-sides} amounts to the claim that for every $\lambda$ there exists $t\in \mathbb R$ such that 
\[
\frac{1+\lambda t}{(1+4t^2+t^4)^{1/4}} 
\ge 
1 + \frac{1}{4}\lambda^2 + \frac{1}{64}\lambda^4 + \frac{1}{128}\lambda^6. 
\]
This is equivalent to proving that the polynomial
\[
\Phi(\lambda, t) := (1+\lambda t)^4 - (1+4t^2+t^4)\left(1 + \frac{1}{4}\lambda^2 + \frac{1}{64}\lambda^4 + \frac{1}{128}\lambda^6 \right)^4 
\]
satisfies $\Phi(\lambda, t) \ge 0$ for some $t$ depending on $\lambda$. We will do so by choosing $t = 4\lambda/(8 - 3\lambda^2)$. The function
\[
\Psi(\lambda) := (8-3\lambda^2)^4 \Phi(\lambda, 4\lambda/(8 - 3\lambda^2))
\]
is a polynomial in $\lambda $ with rational coefficients. Specifically, 
\begin{equation}\label{monster}
\begin{split}
\frac{\Psi(\lambda)}{\lambda^8} & = 50 + \lambda^2 - 
\frac{149}{2^4}\lambda^4 - \frac{209}{2^6} \lambda^6 - \frac{5375}{2^{12}}\lambda^8
- \frac{3069}{2^{13}} \lambda^{10} - \frac{8963}{2^{17}}\lambda^{12}  \\
& - \frac{7837}{2^{19}}\lambda^{14} - \frac{36209}{2^{24}}\lambda^{16}
- \frac{2049}{2^{23}}\lambda^{18} - \frac{1331}{2^{25}}\lambda^{20} 
- \frac{45}{2^{25}}\lambda^{22} - \frac{81}{2^{28}}\lambda^{24}
\end{split}
\end{equation}
which any computer algebra system will readily confirm. On the right hand side of~\eqref{monster}, the coefficients of $\lambda^4, \lambda^6, \lambda^8$ are less than $10$ in absolute value, while the coefficients of higher powers are less than $1$ in absolute value. Thanks to the constant term of $50$, the expression~\eqref{monster} is positive as long as $0<\lambda \le 1 $. This completes the proof of~\eqref{two-sides}.  

In conclusion, we have $G(\lambda)\le F^*(\lambda)$ from~\eqref{two-sides} and $G^*(\lambda)\le 1.01 F(\lambda)$ from~\eqref{001}. This proves the first half of~\eqref{norm-comp1} and the second half of~\eqref{norm-comp2}. The other parts of~\eqref{norm-comp1}--\eqref{norm-comp2} follow by duality. 
\end{proof}

\section{Schwarz lemma for harmonic maps}\label{sec:schwarz} 

Let $\mathbb D = \{z\in \mathbb C\colon |z|<1\}$ be the unit disk in the complex plane. The classical Schwarz lemma concerns holomorphic maps $f\colon \mathbb D\to \mathbb D$ normalized by $f(0)=0$. It asserts in part that  $|f'(0)| \le 1$ for such maps. This inequality is best possible in the sense that for any complex number $\alpha$ such that $|\alpha|\le 1$ there exists $f$ as above with $f'(0)=\alpha$. Indeed, $f(z)=\alpha z$ works.

The story of the Schwarz lemma for harmonic maps $f\colon \mathbb D\to \mathbb D$, still normalized by $f(0)=0$, is more complicated. Such maps satisfy the Laplace equation $\partial \bar \partial f=0$ written here in terms of Wirtinger's derivatives
\[
\partial f = \frac{1}{2}\left(\frac{\partial f}{\partial x} - i\frac{\partial f}{\partial y}\right),\quad 
\bar \partial f = \frac{1}{2}\left(\frac{\partial f}{\partial x} + i\frac{\partial f}{\partial y}\right).
\]
The estimate $|f(z)|\le \frac{4}{\pi}\tan^{-1}|z|$ (see~\cite{Heinz} or~\cite[p. 77]{Duren-harmonic}) implies that 
\begin{equation}\label{sch1}
|\partial f(0)| + |\bar \partial f(0)| \le \frac{4}{\pi}.
\end{equation}
Numerous generalizations and refinements of the harmonic Schwarz lemma appeared in recent years~\cite{KalajVuorinen, Mateljevic}. An important difference with the holomorphic case is that~\eqref{sch1} does not completely describe the possible values of the derivative $(\partial f(0), \bar \partial f(0))$. Indeed, an application of Parseval's identity shows that 
\begin{equation}\label{sch2}
|\partial f(0)|^2 + |\bar \partial f(0)|^2 \le 1
\end{equation}
and neither of~\eqref{sch1} and ~\eqref{sch2} imply each other. It turns out that the complete description of possible derivatives at $0$ requires the dual Hardy norm from~\eqref{def-dual-norm}. The following is a refined form of Theorem~\ref{thm-schwarz-intro} from the introduction. 

\begin{theorem}\label{thm-schwarz} For a vector $(\alpha, \beta)\in \mathbb C^2$ the following are equivalent: 
\begin{enumerate}[(i)]
    \item\label{s1} there exists a harmonic map $f\colon \mathbb D\to\mathbb D$ with $f(0)=0$, $\partial f(0)=\alpha $, and $\bar\partial f(0)=\beta$;
    \item\label{s2} there exists a harmonic map $f\colon \mathbb D\to\mathbb D$ with $\partial f(0)=\alpha $ and $\bar\partial f(0)=\beta$;
    \item\label{s3} $\|(\alpha, \beta)\|_{H^1_*} \le 1$. 
\end{enumerate}
\end{theorem}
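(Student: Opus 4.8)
The plan is to fix the standard representation of a planar harmonic map and reduce everything to a moment problem on the circle. Write $f = h + \bar g$ with $h, g$ holomorphic on $\mathbb D$, so that $\partial f = h'$ and $\bar\partial f = \overline{g'}$; if $h(z) = \sum_{n\ge 0} a_n z^n$ and $g(z) = \sum_{n\ge 0} b_n z^n$, then $\alpha = a_1$ and $\beta = \overline{b_1}$. A harmonic map into $\mathbb D$ is bounded, hence equals the Poisson integral of a boundary function $\psi \in L^\infty(\partial\mathbb D)$ with $\|\psi\|_\infty \le 1$, and the Fourier coefficients of $\psi$ reproduce the series coefficients: $\hat\psi(1) = a_1 = \alpha$ and $\hat\psi(-1) = \overline{b_1} = \beta$. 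Conversely, any such $\psi$ yields a harmonic map via its Poisson integral. Thus, setting aside the normalization at $0$, conditions~\eqref{s2} and~\eqref{s3} both concern the set $K = \{(\hat\psi(1), \hat\psi(-1)) : \psi\in L^\infty(\partial\mathbb D),\ \|\psi\|_\infty \le 1\}$, and I would prove that $K$ is exactly the closed unit ball of $H^1_*$. The implication \eqref{s1}~$\Rightarrow$~\eqref{s2} is immediate, so the work is to establish \eqref{s2}~$\Leftrightarrow$~\eqref{s3} and \eqref{s2}~$\Rightarrow$~\eqref{s1}.

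For \eqref{s2}~$\Leftrightarrow$~\eqref{s3} I would argue by duality. The set $K$ is convex and balanced (the constraint $\|\psi\|_\infty \le 1$ is convex and invariant under $\psi\mapsto c\psi$, $|c|\le 1$), it contains $\{|\alpha|+|\beta|\le 1\}$ (use $\psi(e^{it})=\alpha e^{it}+\beta e^{-it}$), and it is weak-$*$ compact, since $\{\|\psi\|_\infty\le 1\}$ is weak-$*$ compact and $\psi\mapsto(\hat\psi(1),\hat\psi(-1))$ is weak-$*$ continuous; hence $K$ is the closed unit ball of a genuine norm $N$ on $\mathbb C^2$. To identify $N$ I would compute its dual: writing the pairing against the boundary function, for $\eta=(\eta_1,\eta_2)$,
\[
N^*(\eta) = \sup_{\|\psi\|_\infty\le 1}\left| \frac{1}{2\pi}\int_0^{2\pi}\psi(e^{it})\bigl(\bar\eta_1 e^{-it} + \bar\eta_2 e^{it}\bigr)\,dt \right| = \frac{1}{2\pi}\int_0^{2\pi}\bigl|\bar\eta_1 + \bar\eta_2 e^{2it}\bigr|\,dt = \|\eta\|_{H^1},
\]
where the last equality uses the period-doubling substitution together with the symmetry~\eqref{symmetry}. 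Therefore $N^* = \|\cdot\|_{H^1}$, and the bipolar theorem in this finite-dimensional setting gives $N = (N^*)^* = \|\cdot\|_{H^1_*}$, so $K$ is precisely $\{\|\cdot\|_{H^1_*}\le 1\}$. This is the heart of the argument, and it is also where I expect the one genuinely delicate point: the passage from $|\psi|\le 1$ a.e.\ on the boundary to the required $f(\mathbb D)\subseteq \mathbb D$ with an \emph{open} target. The extremal $\psi$ produced by the duality is unimodular, but unless it is constant (which forces $(\alpha,\beta)=0$, handled by $f\equiv 0$) the Poisson integral $f$ is nonconstant, and then subharmonicity of $|f|$ together with the maximum principle yields the strict inequality $|f|<1$ in $\mathbb D$, since a harmonic function of constant modulus is constant. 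This closes the open-disk gap.

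It remains to upgrade \eqref{s2} to the normalized statement \eqref{s1}. Here I would use the odd-part trick: given a harmonic $f\colon\mathbb D\to\mathbb D$ realizing $(\alpha,\beta)$, set $F(z) = \tfrac12\bigl(f(z)-f(-z)\bigr)$. Then $F$ is harmonic, $|F(z)|\le\tfrac12\bigl(|f(z)|+|f(-z)|\bigr)<1$, and $F$ is odd, so $F(0)=0$. Passing from $f$ to $F$ retains exactly the odd Fourier modes and discards the even ones; in particular the modes $\pm1$ are untouched, so $\partial F(0)=\alpha$ and $\bar\partial F(0)=\beta$ are preserved while the value at $0$ is killed. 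This gives \eqref{s2}~$\Rightarrow$~\eqref{s1} and completes the cycle of equivalences. In summary, the only substantive obstacle is the duality identification $N^*=\|\cdot\|_{H^1}$ and the attendant open-disk subtlety; everything else is bookkeeping with Fourier coefficients and the standard theory of bounded harmonic functions.
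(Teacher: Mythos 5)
Your proposal is correct, and it follows a genuinely different route from the paper's. The paper never passes to boundary values: it works with the maps themselves, introducing the set $\mathcal D$ of derivative pairs $(\partial f(0),\bar\partial f(0))$ of maps normalized by $f(0)=0$, proving that $\mathcal D$ is a compact convex balanced body with $0$ in its interior (compactness via normal families, where you use Banach--Alaoglu), and then establishing the dual inequality by exhibiting, for each $(\gamma,\delta)$ with $|\gamma|\neq|\delta|$, the explicit harmonic map $g$ with boundary values $(\gamma z+\delta\bar z)/|\gamma z+\delta\bar z|$. Since that boundary function is odd, $g(0)=0$ comes for free, so the paper proves \eqref{s3}$\implies$\eqref{s1} directly and needs no analogue of your symmetrization $F(z)=\tfrac12(f(z)-f(-z))$; and it proves \eqref{s2}$\implies$\eqref{s3} by pairing $f$ with $\gamma z+\delta\bar z$ on interior circles $|z|=r$, letting $r\to1$, rather than by pairing boundary functions. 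The two arguments share the same extremal object---your optimizing $\psi=\bar\phi/|\phi|$ in the $L^\infty$--$L^1$ duality is exactly the boundary data of the paper's extremal map---but the packaging buys different things. Your reduction of both \eqref{s2} and \eqref{s3} to the coefficient body $K$ of the $L^\infty$ unit ball turns the norm identification into a one-line duality computation plus the bipolar theorem, and it makes you address explicitly (and correctly, via subharmonicity of $|f|$ and the fact that a harmonic function of constant modulus is constant) the passage from $|\psi|\le1$ on the boundary to $f(\mathbb D)\subseteq\mathbb D$ with open target, a point the paper passes over silently when it declares its extremal $g$ to be a map $\mathbb D\to\mathbb D$. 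The paper's formulation, staying entirely within harmonic-function theory, transfers essentially verbatim to higher dimensions (Theorem~\ref{thm-schwarz-higher}), where the same extremal maps $x\mapsto Bx/\|Bx\|$ reappear; your boundary reduction would generalize too, with $L^\infty$ of the sphere in place of the circle.
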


\begin{remark} Both~\eqref{sch1} and~\eqref{sch2} easily follow from Theorem~\ref{thm-schwarz}. To obtain~\eqref{sch1}, use the definition of $H^1_*$ together with the fact that $\|(a_1, a_2)\|_{H^1} = 4/\pi$  whenever $|a_1|=|a_2|=1$ (see~\eqref{specialH1}, \eqref{symmetry}). To obtain~\eqref{sch2}, use the comparison of Hardy norms: $\|\cdot \|_{H^1}\le \|\cdot \|_{H^2}$, hence $\|\cdot \|_{H^1_*}\ge \|\cdot \|_{H^2_*} = \|\cdot \|_{H^2}$. 
\end{remark}

\begin{remark} Combining Theorem~\ref{thm-schwarz} with Theorem~\ref{thm-norm-comp} we obtain  
\begin{equation}\label{eqn-harmonic-Schwarz}
\|(\partial f(0), \bar \partial f(0))\|_{H^4} \le 1
\end{equation}
for any harmonic map $f\colon \mathbb D\to \mathbb D$. In view of~\eqref{explicit-norms} this means 
$|\partial f(0)|^4 + 4 |\partial f(0) \bar \partial f(0)|^2 + |\bar\partial f(0)|^4\le 1$.
\end{remark}

\begin{proof}[Proof of Theorem~\ref{thm-schwarz}] \eqref{s1}$\implies$\eqref{s2} is trivial. Suppose that~\eqref{s2} holds. To prove~\eqref{s3}, we must show that  
\begin{equation}\label{ps1}
|\alpha \bar \gamma + \beta\bar \delta | \le \|(\gamma, \delta)\|_{H^1}   
\end{equation}
for every vector $(\gamma, \delta)\in \mathbb C^2$. Let $g(z) = \gamma z + \delta \bar z$. Expanding $f$ into the Taylor series $f(z) = f(0) + \alpha z + \beta \bar z + \dots $ and using the orthogonality of monomials on every circle $|z|=r$, $0<r<1$, we obtain 
\begin{equation}\label{ps0}
|\alpha \bar \gamma + \beta\bar \delta| = 
\frac{1}{2\pi r^2}
\left|\int_0^{2\pi} f(re^{it})\overline{g(re^{it})}\,dt \right|
\le \frac{1}{2\pi r^2} \int_0^{2\pi} |g(re^{it})|\,dt.
\end{equation}
Letting $r\to 1$ and observing that
\begin{equation}\label{equiv-h1}
\frac{1}{2\pi}\int_0^{2\pi} |\gamma e^{it} + \delta e^{-it}|\,dt
= \frac{1}{2\pi}\int_0^{2\pi} |\gamma + \delta e^{-2it}|\,dt
= \frac{1}{2\pi}\int_0^{2\pi} |\gamma + \delta e^{it}|\,dt
= \|(\gamma, \delta)\|_{H^1}
\end{equation}
completes the proof of~\eqref{ps1}.

It remains to prove the implication \eqref{s3}$\implies$\eqref{s1}. Let $\mathcal F_0$ be the set of harmonic maps $f\colon \mathbb D\to\mathbb D$ such that $f(0)=0$, and let $\mathcal D = \{(\partial f(0), \bar\partial f(0))\colon f\in \mathcal F_0\}$. Since $\mathcal F_0$ is closed under convex combinations, the set $\mathcal D$ is convex. Since the function $f(z)=\alpha z + \beta \bar z$ belongs to $\mathcal F_0$ when $|\alpha|+|\beta|\le 1$, the point $(0, 0)$ is an interior point of $\mathcal D$. The estimate~\eqref{sch2} shows that $\mathcal D$ is bounded. 
Furthermore, $c \mathcal D\subset \mathcal D$ for any complex number $c$ with $|c|\le 1$,  because $\mathcal F_0$ has the same property. We claim that $\mathcal D$ is also a closed subset of $\mathbb C^2$. Indeed, suppose that a sequence of vectors $(\alpha_n, \beta_n)\in \mathcal D $ converges to $(\alpha, \beta)\in \mathbb C^2$. Pick a corresponding sequence of maps $f_n\in \mathcal F_0$. Being uniformly bounded, the maps $\{f_n\}$ form a normal family~\cite[Theorem 2.6]{ABR}. Hence there exists a subsequence $\{f_{n_k}\}$ which converges uniformly on compact subsets of $\mathbb D$. The limit of this subsequence is a map $f\in \mathcal F_0$ with  $\partial f(0)=\alpha$ and $\bar \partial f(0)=\beta$. 

The preceding paragraph shows that $\mathcal D$ is the closed unit ball for some norm $\|\cdot \|_{\mathcal D}$ on $\mathbb C^2$. The implication \eqref{s3}$\implies$\eqref{s1} amounts to the statement that $\|\cdot \|_{\mathcal D} \le \|\cdot \|_{H^1_*}$. We will prove it in the dual form \begin{equation}\label{ps2}
\sup\{|\gamma \overline{\alpha} + \delta \overline{\beta}| \colon (\alpha, \beta) \in \mathcal D \} \ge \|(\gamma, \delta) \|_{H^1} 
\quad \text{for all }  (\gamma, \delta)\in \mathbb C^2.
\end{equation}
Since norms are continuous functions, it suffices to consider $(\gamma, \delta)\in \mathbb C^2$ with $|\gamma|\ne |\delta|$. Let $g\colon \mathbb D\to \mathbb D$ be the harmonic map with boundary values 
\[
g(z) = \frac{\gamma z + \delta \bar z}{|\gamma z + \delta \bar z|},\quad |z|=1.
\]
Note that $g(-z)=-g(z)$ on the boundary, and therefore everywhere in $\mathbb D$. In particular, $g(0)=0$, which shows $g\in \mathcal F_0$. Let $(\alpha, \beta)= (\partial g(0), \bar\partial g(0))\in \mathcal D$. A computation similar to~\eqref{ps0} shows that
\[
\begin{split}
\gamma \bar \alpha + \delta\bar \beta & =
\frac{1}{2\pi}
\int_0^{2\pi} (\gamma e^{it} + \delta e^{-it}) \overline{g(e^{it})}
\,dt \\ 
& = 
\frac{1}{2\pi}
\int_0^{2\pi} (\gamma e^{it} + \delta e^{-it}) 
\frac{\overline{\gamma e^{it} + \delta e^{-it}}}{|\gamma e^{it} + \delta e^{-it}|}\,dt 
\\ & =
\frac{1}{2\pi}
\int_0^{2\pi} |\gamma e^{it} + \delta e^{-it}|\,dt =  \|(\gamma, \delta) \|_{H^1}
\end{split}
\]
where the last step uses~\eqref{equiv-h1}. This proves~\eqref{ps2} and completes the proof of Theorem~\ref{thm-schwarz}.
\end{proof}

\section{Higher dimensions}\label{sec:higher}

A version of the Schwarz lemma is also available for harmonic maps of the (Euclidean) unit ball $\mathbb B$ in $\mathbb R^n$. Let $\mathbb S = \partial \mathbb B$. For a square matrix $A\in \mathbb R^{n\times n}$, define  its Hardy quasinorm by 
\begin{equation}\label{def-matrix-norm}
\|A\|_{H^p} = \left(\int_{\mathbb S}\|Ax\|^p\,d\mu(x)\right)^{1/p}
\end{equation} 
where the integral is taken with respect to normalized surface measure $\mu$ on $\mathbb S$ and the vector norm $\|Ax\|$ is the Euclidean norm. In the limit $p\to \infty$ we recover the spectral norm of $A$, while the special case $p=2$ yields the Frobenius norm of $A$ divided by $\sqrt{n}$. The case $p=1$ corresponds to ``expected value norms'' studied by Howe and Johnson in~\cite{HoweJohnson}. Also, letting $p\to 0$ leads to  
\begin{equation}\label{Mahler-norm}
\|A\|_{H^0} = \exp\left(\int_{\mathbb S}\log \|Ax\| \,d\mu(x)\right)
\end{equation} 

In general, $H^p$ quasinorms on matrices are not submultiplicative. However, they have another desirable feature, which follows directly from~\eqref{def-matrix-norm}: $\|UAV\|_{H^p} = \|A\|_{H^p}$ for any orthogonal matrices $U, V$.  The singular value decomposition shows that $\|A\|_{H^p}= \|D\|_{H^p}$ where $D$ is the diagonal matrix with the singular values of $A$ on its diagonal. 

Let us consider the matrix inner product $\langle A, B\rangle = \frac{1}{n}\tr(B^T A)$, which is  normalized so that $\langle I, I\rangle = 1$.  This inner product can be expressed by an integral involving the standard inner product on $\mathbb R^n$ as follows: 
\begin{equation}\label{integral-inner-product}
\langle A, B\rangle = \int_{\mathbb S} \langle Ax, Bx\rangle \,d\mu(x). 
\end{equation}
Indeed, the right hand side of~\eqref{integral-inner-product} is the average of the numerical values $\langle B^TAx, x\rangle$, which is known to be the normalized trace of $B^TA$, see~\cite{Kania}.

The dual norms $H^p_*$ are defined on $\mathbb R^{n\times n}$ by 
\begin{equation}\label{def-dual-matrix-norm}
\|A\|_{H^p_*} = \sup\left\{
\langle A, B\rangle \colon 
\|B\|_{H^p} \le 1 \right\}  
= \sup_{B\in\mathbb R^{n\times n}\setminus\{0\}} 
\frac{\langle A, B\rangle}{\|B\|_{H^p}}.
\end{equation}

Applying H\"older's inequality to~\eqref{integral-inner-product} yields $\langle A, B\rangle \le \|A\|_{H^q}\|B\|_{H^p}$ when $p^{-1}+q^{-1}=1$. Hence $\|A\|_{H^p_*} \le \|A\|_{H^q}$ but in general the inequality is strict. As an exception, we have $\|A\|_{H^2_*} = \|A\|_{H^2}$ because 
$\langle A, A\rangle = \|A\|_{H^2}^2$. As in the case of polynomials, our interest in dual Hardy norms is driven by their relation to harmonic maps.  

\begin{theorem}\label{thm-schwarz-higher} For a matrix $A\in \mathbb R^{n\times n}$ the following are equivalent: 
\begin{enumerate}[(i)]
    \item\label{s1h} there exists a harmonic map $f\colon \mathbb B\to\mathbb B$ with $f(0)=0$ and $Df(0)=A$;
    \item\label{s2h} there exists a harmonic map $f\colon \mathbb B\to\mathbb B$ with $Df(0)=A$;
    \item\label{s3h} $\|A\|_{H^1_*} \le 1$. 
\end{enumerate}
\end{theorem}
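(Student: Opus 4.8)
The plan is to follow the same three-step scheme as in the proof of Theorem~\ref{thm-schwarz}, replacing the orthogonality of the monomials $z^k$ on circles by the $L^2(\mathbb S)$-orthogonality of homogeneous harmonic polynomials of distinct degrees, and replacing the boundary identity~\eqref{equiv-h1} by the inner-product formula~\eqref{integral-inner-product}. The implication \eqref{s1h}$\implies$\eqref{s2h} is trivial, so the content is in \eqref{s2h}$\implies$\eqref{s3h} and \eqref{s3h}$\implies$\eqref{s1h}.

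For \eqref{s2h}$\implies$\eqref{s3h}, given a harmonic $f\colon\mathbb B\to\mathbb B$ with $Df(0)=A$ and any $B\in\mathbb R^{n\times n}$, I would expand each component of $f$ into homogeneous harmonic polynomials. The degree-$1$ part of $f(rx)$ is $r\,Ax$, and pairing against the linear map $x\mapsto Bx$ (a degree-$1$ harmonic polynomial in each coordinate) annihilates all other degrees by orthogonality, giving
\[
\int_{\mathbb S}\langle f(rx),Bx\rangle\,d\mu(x)=r\int_{\mathbb S}\langle Ax,Bx\rangle\,d\mu(x)=r\,\langle A,B\rangle
\]
via~\eqref{integral-inner-product}. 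Since $\|f\|\le 1$, the left side is at most $\int_{\mathbb S}\|Bx\|\,d\mu(x)=\|B\|_{H^1}$ in absolute value; letting $r\to1$ yields $\langle A,B\rangle\le\|B\|_{H^1}$ for all $B$, that is, $\|A\|_{H^1_*}\le1$.

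For \eqref{s3h}$\implies$\eqref{s1h}, I would show that $\mathcal D=\{Df(0):f\in\mathcal F_0\}$, where $\mathcal F_0$ is the set of harmonic $f\colon\mathbb B\to\mathbb B$ with $f(0)=0$, is the closed unit ball of a norm on $\mathbb R^{n\times n}$, exactly as in the planar case: convexity from convexity of $\mathcal F_0$; the zero matrix is interior because $x\mapsto Ax$ lies in $\mathcal F_0$ whenever the spectral norm of $A$ is at most $1$; boundedness from $\|A\|_{H^2}\le1$ (the degree-$1$ term of $\int_{\mathbb S}\|f\|^2$ is dominated by the whole integral); balancedness $tA\in\mathcal D$ for real $t$ with $|t|\le1$; and closedness via a normal-family argument for uniformly bounded harmonic maps. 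The claim $\|\cdot\|_{\mathcal D}\le\|\cdot\|_{H^1_*}$ is then equivalent to the dual inequality $\sup\{\langle A,B\rangle:A\in\mathcal D\}\ge\|B\|_{H^1}$ for every $B$. To realize the supremum I would, for invertible $B$ (a dense set, so enough by continuity of norms), take $g$ to be the Poisson extension of the boundary map $x\mapsto Bx/\|Bx\|$ on $\mathbb S$; this $g$ maps into $\mathbb B$ by the maximum principle, is odd hence fixes $0$, so $g\in\mathcal F_0$, and with $A=Dg(0)$ the computation above gives
\[
\langle A,B\rangle=\int_{\mathbb S}\Big\langle \frac{Bx}{\|Bx\|},Bx\Big\rangle\,d\mu(x)=\int_{\mathbb S}\|Bx\|\,d\mu(x)=\|B\|_{H^1}.
\]

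The main obstacle I anticipate is not any single computation—these mirror the planar proof—but assembling the correct higher-dimensional replacements for its ingredients: the orthogonality of spherical harmonics of different degrees (to isolate the linear term), a normal-family/compactness theorem for uniformly bounded harmonic maps of $\mathbb B$ to justify closedness of $\mathcal D$, and the boundary control of the Poisson extension $g$ needed to pass from $\int_{\mathbb S}\langle g(rx),Bx\rangle\,d\mu$ at $r<1$ to the boundary integral. Restricting to invertible $B$ keeps the boundary map $Bx/\|Bx\|$ continuous on $\mathbb S$, which should render this last limit routine.
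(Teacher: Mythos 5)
Your proposal is correct and takes essentially the same approach as the paper: its proof of Theorem~\ref{thm-schwarz-higher} is precisely this transplantation of the planar argument, using orthogonality of spherical harmonics for \eqref{s2h}$\implies$\eqref{s3h} and, for \eqref{s3h}$\implies$\eqref{s1h}, the harmonic extension of $x\mapsto Bx/\|Bx\|$ for nonsingular $B$ combined with the duality argument of Theorem~\ref{thm-schwarz}. The details you fill in (convexity, boundedness, and closedness of $\mathcal D$, and oddness of $g$ forcing $g(0)=0$) are exactly those the paper delegates to its planar proof.
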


\begin{proof} Since the proof is essentially the same as of Theorem~\ref{thm-schwarz}, we only highlight some notational differences. Suppose (\ref{s2h}) holds. Expand $f$ into a series of spherical harmonics,
$f(x) = \sum_{d=0}^\infty p_d(x)$ where $p_d\colon \mathbb R^n\to\mathbb R^n$ is a harmonic polynomial map that is homogeneous of degree $d$. Note that $p_1(x) = Ax$. For any $n\times n$ matrix $B$ the orthogonality of spherical harmonics~\cite[Proposition 5.9]{ABR} yields 
\[
\langle A, B\rangle 
= \lim_{r\nearrow 1} \int_{\mathbb S}\langle f(rx), Bx\rangle \,d\mu(x)
\le \|B\|_1
\]
which proves~(\ref{s3h}). 

The proof of (\ref{s3h})$\implies$(\ref{s1h}) is based on considering, for any nonsingular matrix $B$, a harmonic map $g\colon \mathbb B\to\mathbb B$ with boundary values $g(x) = (Bx)/\|Bx\|$. Its derivative $A=Dg(0)$ satisfies 
\[\begin{split} 
\langle B, A\rangle = 
\int_{\mathbb S}\langle Bx, g(x) \rangle \,d\mu(x)
= \int_{\mathbb S}\frac{\langle Bx, Bx \rangle}{\|Bx\|} \,d\mu(x) = \|B\|_{H^1}
\end{split}\]
and (\ref{s1h}) follows by the same duality argument as in Theorem~\ref{thm-schwarz}. 
\end{proof}

As an indication that the near-isometric duality of $H^1$ and $H^4$ norms (Theorem~\ref{thm-norm-comp}) may also hold in higher dimensions, we compute the relevant norms of $P_k$, the matrix of an orthogonal projection of rank $k$ in $\mathbb R^3$. For rank 1 projection, the norms are  
\[
\begin{split}
\|P_1\|_{H^1} & = 
\int_{0}^1 r\,dr = \frac{1}{2}, \\ 
\|P_1\|_{H^4} & = 
\left(\int_{0}^1 r^4\,dr\right)^{1/4} = \frac{1}{5^{1/4}} \approx 0.67, \\ 
\|P_1\|_{H^1_*} & = 
\frac{\langle P_1, P_1\rangle}{\|P_1\|_1}  = \frac{1/3}{1/2} = \frac{2}{3} \approx 0.67. 
\end{split}
\]
For rank 2 projection, they are 
\[
\begin{split}
\|P_2\|_{H^1} & = 
\int_{0}^1 \sqrt{1-r^2}\,dr = \frac{\pi}{4}, \\ 
\|P_2\|_{H^4} & = 
\left(\int_{0}^1 (1-r^2)^2\,dr\right)^{1/4} = \left(\frac{8}{15}\right)^{1/4}  \approx 0.85, \\ 
\|P_2\|_{H^1_*} & = 
\frac{\langle P_2, P_2\rangle}{\|P_2\|_1}  = \frac{2/3}{\pi/4} = \frac{8}{3\pi} \approx 0.85. 
\end{split}
\]
This numerical agreement does not appear to be merely a coincidence, as  numerical experiments with random $3\times 3$ indicate that the ratio $\|A\|_{H^1_*} / \|A\|_{H^4}$ is always near $1$. However, we do not have a proof of this. 

As in the case of polynomials, there is an explicit formula for the $H^4$ norm of matrices. Writing $\sigma_1,\dots, \sigma_n$ for the singular values of $A$, we find  
\begin{equation}\label{H4matrix}
\|A\|_{H^4}^4 = \alpha \sum_{k=1}^n \sigma_k^4  + 2 \beta \sum_{k<l} \sigma_k^2 \sigma_l^2 
\end{equation}
where $\alpha = \int_{\mathbb S} x_1^4\,d\mu(x)$ and $\beta = \int_{\mathbb S} x_1^2x_2^2\,d\mu(x)$. For example, if $n=3$, the expression~\eqref{H4matrix} evaluates to
\[
\|A\|_{H^4}^4 = \frac{1}{5} \sum_{k=1}^3 \sigma_k^4 + \frac{2}{15}\sum_{k<l} \sigma_k^2 \sigma_l^2.   
\]

Theorem~\ref{thm-hardy-norm} has a corollary for $2\times 2$ matrices.

\begin{corollary}\label{cor-hardy-norm}
The $H^p$ quasinorm on the space of $2\times 2$ matrices satisfies the triangle inequality even when $0\le p<1$. \end{corollary}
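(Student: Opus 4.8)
The plan is to deduce the corollary from Theorem~\ref{thm-hardy-norm} by identifying the space of $2\times 2$ real matrices with $\mathbb C^2$ through a linear isomorphism that transports the matrix $H^p$ quasinorm~\eqref{def-matrix-norm} exactly onto the polynomial $H^p$ quasinorm on $\mathbb C^2$. The starting point is that every real-linear self-map of $\mathbb R^2\cong\mathbb C$ has a unique Wirtinger representation $z\mapsto az+b\bar z$ with $(a,b)\in\mathbb C^2$; solving for $a,b$ in terms of the four matrix entries shows that $(a,b)\mapsto A$ is a bijective real-linear map $\mathbb C^2\to\mathbb R^{2\times 2}$. This identification is additive, so that matrix addition corresponds to addition of the pairs $(a,b)$.

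First I would establish the isometry $\|A\|_{H^p}=\|(a,b)\|_{H^p}$. Parametrizing $\mathbb S\subset\mathbb R^2$ by $x=e^{i\theta}$ gives $\|Ax\|=|ae^{i\theta}+be^{-i\theta}|=|a+be^{-2i\theta}|$, so
\[
\|A\|_{H^p}^p=\frac{1}{2\pi}\int_0^{2\pi}|a+be^{-2i\theta}|^p\,d\theta=\frac{1}{2\pi}\int_0^{2\pi}|a+be^{it}|^p\,dt=\|(a,b)\|_{H^p}^p,
\]
where the middle equality holds because $\theta\mapsto|a+be^{-2i\theta}|^p$ has period $\pi$ (so doubling the frequency does not change the average) together with the fact, recorded in~\eqref{swap} and~\eqref{symmetry}, that the integral depends only on $|a|$ and $|b|$. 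The cases $p=0,\infty$ follow by passing to the limit or directly from~\eqref{explicit-norms}.

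Once the isometry is in place, the corollary is immediate: for matrices $A,A'$ corresponding to $(a,b),(a',b')$, additivity of the identification and the triangle inequality from Theorem~\ref{thm-hardy-norm} yield
\[
\|A+A'\|_{H^p}=\|(a+a',b+b')\|_{H^p}\le\|(a,b)\|_{H^p}+\|(a',b')\|_{H^p}=\|A\|_{H^p}+\|A'\|_{H^p}.
\]
I expect the only real work to be the isometry step, namely checking that the Wirtinger parametrization surjects onto all of $\mathbb R^{2\times 2}$ and that the frequency-doubling on the circle leaves the average untouched; neither is hard. It is worth stressing why the linear-isomorphism route is the right one: the tempting alternative of writing $\|A\|_{H^p}=\|(\tfrac{\sigma_1+\sigma_2}{2},\tfrac{\sigma_1-\sigma_2}{2})\|_{H^p}$ in terms of singular values and bounding componentwise fails, since $\tfrac12(\sigma_1-\sigma_2)=\min(|a|,|b|)$ is not subadditive. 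Transporting the norm through the linear isomorphism avoids this obstruction entirely.
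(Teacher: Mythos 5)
Your proposal is correct and follows essentially the same route as the paper: identify $A$ with its Wirtinger pair $(a,b)$ via $z\mapsto az+b\bar z$, verify by a change of variable on the circle that $\|A\|_{H^p}=\|(a,b)\|_{H^p}$, and then invoke Theorem~\ref{thm-hardy-norm}. Your version just spells out the frequency-doubling computation and the additivity of the identification, which the paper leaves implicit.
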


\begin{proof}
A real linear map  $x\mapsto Ax$ in $\mathbb R^2$ can be written in complex notation as $z\mapsto az+b\bar z$ for some $(a, b)\in \mathbb C^2$. A change of variable yields 
\[
\int_{|z|=1} |az+b\bar z|^p 
= \int_{|z|=1} |a+bz|^p
\]
which implies $\|A\|_{H^p} = \|(a, b)\|_{H^p}$ for $p>0$. The latter is a norm on $\mathbb C^2$ by  Theorem~\ref{thm-hardy-norm}. The case $p=0$ is treated in the same way. 
\end{proof}

The aforementioned relation between a $2\times 2$ matrix $A$ and a complex vector $(a, b)$ also shows that the singular values of $A$ are $\sigma_1 = |a|+|b|$ and $\sigma_2 = ||a|-|b||$. It then follows from~\eqref{explicit-norms} that 
\[
\|A\|_{H^0} = \max(|a|, |b|)
 = \frac{\sigma_1 + \sigma_2}{2},
\]
which is, up to scaling, the trace norm of $A$. Unfortunately, this relation breaks down in dimensions $n>2$: for example, rank $1$ projection $P_1$ in $\mathbb R^3$ has $\|P_1\|_{H^0} = 1/e$ while the average of its singular values is $1/3$. 

We do not know whether $H^p$ quasinorms with $0\le p<1$ satisfy the triangle inequality for $n\times n$ matrices when $n\ge 3$.   

\bibliography{references.bib} 
\bibliographystyle{plain} 

\end{document}